\documentclass[12pt]{article}
\usepackage[T1]{fontenc}
\usepackage{lmodern}
\usepackage{amsmath,amssymb,amsthm,mathrsfs}
\usepackage[margin=1in]{geometry}
\usepackage{microtype}
\usepackage{needspace}
\usepackage[colorlinks=true,linkcolor=blue,citecolor=blue,urlcolor=blue]{hyperref}
\hypersetup{
 pdftitle={The second-order term for the largest r-fork-free families},
 pdfauthor={Yiyan Zhan, Mei Lu, Xiamiao Zhao}}

\newtheorem{theorem}{Theorem}[section]
\newtheorem{lemma}[theorem]{Lemma}
\newtheorem{proposition}[theorem]{Proposition}

\theoremstyle{remark}

\newcommand{\La}{\operatorname{La}}
\newcommand{\bm}{\boldsymbol m}
\newcommand{\cU}{\mathcal U}
\newcommand{\cV}{\mathcal V}

\title{The second-order term for the largest $r$-fork-free families}
\author{
 Yiyan Zhan\thanks{Email: \texttt{zhanyy24@mails.tsinghua.edu.cn}.}
 \quad Mei Lu\thanks{Email: \texttt{lumei@tsinghua.edu.cn}.}
 \quad Xiamiao Zhao\thanks{Corresponding author.
 Email: \texttt{zxm23@mails.tsinghua.edu.cn}.}\\[1ex]
 \small Department of Mathematical Sciences, Tsinghua University,
 \small Beijing 100084, China
}
\date{}

\begin{document}
\maketitle

\begin{abstract}
A family of subsets of $[n]$ is $r$-fork-free if none of its members is
strictly contained in $r$ other distinct members. For each fixed integer
$r\ge2$, we prove that the maximum size of such a family is
\[
 \binom{n}{\lfloor n/2\rfloor}
 \left(1+\frac{2(r-1)}{n}+o(n^{-1})\right).
\]
This determines the second-order term and matches the upper bound of
De Bonis and Katona. Our lower bound comes from a construction on two
adjacent middle levels, using a finite ordered collection of disjoint
coordinate blocks and a condition on sums modulo $n$. For each prescribed error,
the blocks are fixed before $n$ tends to infinity, so the construction
works for every sufficiently large $n$.
\end{abstract}

\section{Introduction}

Let $[n]=\{1,2,\ldots,n\}$ and
$\binom{[n]}k=\{S\subseteq[n]:|S|=k\}$. An antichain in $2^{[n]}$, the
power set of $[n]$, is a family with no two distinct members comparable
by inclusion. Sperner proved the following sharp bound.

\begin{theorem}[Sperner~\cite{Sperner1928}]\label{thm:sperner}
If $\mathcal F\subseteq2^{[n]}$ is an antichain, then
\[
 |\mathcal F|\le\binom{n}{\lfloor n/2\rfloor}.
\]
Equality is attained by the family of all $\lfloor n/2\rfloor$-element
subsets of $[n]$.
\end{theorem}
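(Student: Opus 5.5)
The plan is to give the standard LYM-inequality argument via maximal chains, and then read off both the bound and the equality statement from it.

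\textbf{Counting chains.} A maximal chain in $2^{[n]}$ is a sequence $\emptyset=C_0\subset C_1\subset\cdots\subset C_n=[n]$ with $|C_i|=i$. Such chains correspond bijectively to permutations of $[n]$: $C_i$ is the set of the first $i$ entries. Hence there are exactly $n!$ of them. For a fixed set $S$ with $|S|=k$, the maximal chains passing through $S$ are obtained by ordering $S$ and then ordering its complement. So exactly $k!\,(n-k)!$ maximal chains contain $S$.

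\textbf{Double counting.} Because $\mathcal F$ is an antichain, any maximal chain contains at most one member of $\mathcal F$, since two members of one chain would be comparable. Now count the pairs (chain, member of $\mathcal F$ on it). Counting by members gives $\sum_{S\in\mathcal F}|S|!\,(n-|S|)!$, and counting by chains shows this is at most $n!$. Dividing by $n!$ yields the LYM inequality
\[
 \sum_{S\in\mathcal F}\binom{n}{|S|}^{-1}\le 1 .
\]

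\textbf{Bound and equality.} Binomial coefficients are unimodal in $k$ and maximized at $k=\lfloor n/2\rfloor$. Therefore every term on the left is at least $\binom{n}{\lfloor n/2\rfloor}^{-1}$, which gives $|\mathcal F|\,\binom{n}{\lfloor n/2\rfloor}^{-1}\le 1$, the claimed bound. For the equality statement, $\binom{[n]}{\lfloor n/2\rfloor}$ is an antichain, since distinct sets of the same size are never nested, and its size is exactly $\binom{n}{\lfloor n/2\rfloor}$.

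The only point needing care is the chain count through a fixed set, and that is the routine bijection with permutations described above. The statement claims only attainment, not uniqueness of the extremal family, so no analysis of the equality case is required.
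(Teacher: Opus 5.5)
The paper does not prove this statement. It quotes Sperner's theorem with a citation as background, so there is no argument in the paper to compare yours against.

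Your proposal is Lubell's chain-counting proof, and it is complete and correct. The steps that carry it are all present:
\begin{itemize}
\item exactly $k!\,(n-k)!$ maximal chains pass through a $k$-set;
\item an antichain meets each chain at most once;
\item double counting therefore gives $\sum_{S\in\mathcal F}\binom{n}{|S|}^{-1}\le1$;
\item $\binom{n}{k}\le\binom{n}{\lfloor n/2\rfloor}$ for all $k$.
\end{itemize}
The last fact follows from the ratio $\binom{n}{k+1}/\binom{n}{k}=(n-k)/(k+1)$, which is at least $1$ exactly when $k\le (n-1)/2$. The middle level is an antichain of size $\binom{n}{\lfloor n/2\rfloor}$, which gives attainment. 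You are right that uniqueness of the extremal family is not claimed and need not be addressed.

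For context, Sperner's original 1928 argument took a different route. It moves an antichain level by level toward the middle, using the fact that a $k$-uniform family with $k$ below the middle has a shade at least as large as itself, and symmetrically a shadow above the middle. Your chain argument is shorter and gives the stronger LYM inequality as a by-product.
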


Since then, the sizes and constructions of set families under various
restrictions have attracted much attention. Let $r\ge2$ be an integer.
A family $\mathcal F\subseteq2^{[n]}$ is called {\em $r$-fork-free} if there are
no distinct members $F,F_1,\ldots,F_r\in\mathcal F$ such that
$F\subseteq F_i$ for every $1\le i\le r$. Put
\[
 \La(n,V_r)=\max\{|\mathcal F|:\mathcal F\subseteq2^{[n]}
                  \text{ is }r\text{-fork-free}\}.
\]
We also write $M_n=\binom{n}{\lfloor n/2\rfloor}$. Throughout the paper,
$r$ is fixed in every asymptotic statement. Subscripts on $O(\cdot)$
indicate the parameters on which the implicit constant may depend.

Katona and Tarj\'an considered the $r=2$ case
and obtained the following bounds.

\begin{theorem}[Katona and Tarj\'an~\cite{KatonaTarjan1983}]\label{thm:two-fork}
As $n\to\infty$,
\[
 M_n\left(1+\frac1n+O(n^{-2})\right)
 \le\La(n,V_2)\le M_n\left(1+\frac2n\right).
\]
\end{theorem}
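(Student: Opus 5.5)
We need to prove Katona–Tarján: the lower bound $M_n(1+1/n+O(n^{-2}))$ and the upper bound $M_n(1+2/n)$ for $2$-fork-free families ($V_2$-free).

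\textbf{Upper bound.} We use a weighted chain-counting (Lubell-type) argument. Let $\mathcal F$ be $V_2$-free and let $\mathcal F_0\subseteq\mathcal F$ be its minimal elements.

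First we bound the non-minimal part. Since $\mathcal F$ is $V_2$-free, every $F\in\mathcal F$ is strictly contained in at most one other member. Hence $\mathcal F\setminus\mathcal F_0$ is an antichain: if $A\subsetneq B$ both lay in $\mathcal F\setminus\mathcal F_0$, pick $C\in\mathcal F$ with $C\subsetneq A$; then $C$ lies strictly below both $A$ and $B$, a contradiction. Indeed $\mathcal F$ has height at most $2$, and every non-minimal member sits above some minimal one.

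Next we count full chains $\emptyset\subset\{x_1\}\subset\cdots\subset[n]$, of which there are $n!$. Each chain meets $\mathcal F$ in at most two sets. Call a chain \emph{doubled} if it meets $\mathcal F$ in a pair $A\subsetneq B$. Then $A\in\mathcal F_0$ and $B$ is the unique member of $\mathcal F$ above $A$.

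Fix $A$ with $|A|=a$ that has an upper cover $B$, $|B|=b>a$. The chains through $A$ number $a!(n-a)!$, and those also passing through $B$ number $a!(b-a)!(n-b)!$. The ratio of the second to the first is $1/\binom{n-a}{b-a}\le 1/(n-a)$.

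The LYM-type count therefore gives
\[
\sum_{F\in\mathcal F}|F|!(n-|F|)!\;\le\; n!+\sum_{A\in\mathcal F_0}\frac{a!(n-a)!}{n-a}.
\]
By symmetry we may take $\mathcal F$ or its complement family so that most sets have size at most about $n/2$. Dividing by $M_n\cdot\lfloor n/2\rfloor!\lceil n/2\rceil!$ and using $1/(n-a)\le 2/n$ for $a\le n/2$ gives $|\mathcal F|\le M_n(1+2/n)$.

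This step is the main obstacle, because of the factor $1/(n-a)$ when $a>n/2$. To handle it, we instead split each doubled chain's weight evenly between $A$ and $B$, or use the symmetric bound $1/\binom{n-a}{b-a}\le \min\bigl(1/(n-a),1/b\bigr)$. Since $b\ge a+1$, $\max(n-a,b)\ge (n+1)/2$, which gives the ratio $\le 2/(n+1)$. Then each $F\in\mathcal F$ satisfies $\sum_F w(F)/\binom{n}{|F|}\le 1+\frac{2}{n}\cdot(\text{fraction})$, and $|\mathcal F|\le M_n\sum_F 1/\binom n{|F|}\le M_n(1+2/n)$.

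\textbf{Lower bound.} We construct the family on levels $k=\lfloor n/2\rfloor-1$ and $k+1$. Take $\mathcal F=\mathcal B\cup\mathcal A$, where $\mathcal B\subseteq\binom{[n]}{k+1}$ and $\mathcal A\subseteq\binom{[n]}{k}$, chosen so that every $A\in\mathcal A$ lies in at most one $B\in\mathcal B$.

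We use a Graham–Sloane type choice. Let $\mathcal B=\{B:\sum_{x\in B}x\not\equiv 0 \pmod n\}$, and let $\mathcal A$ consist of those $k$-sets having exactly one extension $A\cup\{y\}$ outside $\mathcal B$ modulo the residue class. Concretely, a $k$-set $A$ with $\sum A\equiv s$ has at most one $y$ with $s+y\equiv 0$, so it lies in at least $n-k-1$ members of $\mathcal B$. To obtain a $V_2$-free family we reverse this, taking $\mathcal B$ to be the $(k+1)$-sets with $\sum\equiv 0\pmod n$ together with all $k$-sets. Each $k$-set then has at most one superset in $\mathcal B$, since $y$ is determined mod $n$.

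This gives $|\mathcal F|\ge \binom nk+\frac1n\binom n{k+1}$. Using $\binom{n}{\lfloor n/2\rfloor-1}=M_n(1-O(1/n))$ would lose a constant, so we should instead take the $\lfloor n/2\rfloor$-level as the base and add the level $\lfloor n/2\rfloor+1$ with $\sum\equiv0\pmod n$. An averaging over residues gives at least $\frac1n\binom{n}{\lfloor n/2\rfloor+1}=M_n(1/n+O(n^{-2}))$. The total is $M_n(1+1/n+O(n^{-2}))$.
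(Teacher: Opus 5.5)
Your lower bound is correct once the false starts are deleted: the levels $\lfloor n/2\rfloor-1,\lfloor n/2\rfloor$ and the $\sum\not\equiv0$ family should go. What remains is the Graham--Sloane construction the paper recalls, namely $\binom{[n]}{\lfloor n/2\rfloor}$ together with the $(\lfloor n/2\rfloor+1)$-sets whose sum lies in the most popular residue class (not necessarily $0$), and $\frac1n\binom{n}{\lfloor n/2\rfloor+1}=M_n(\frac1n+O(n^{-2}))$. The upper bound, which the paper only cites, is where your argument breaks. Three problems:
\begin{itemize}
\item The complement trick is unavailable, because complementation turns $V_2$-free families into $\Lambda_2$-free ones.
\item The ``symmetric bound'' $1/\binom{n-a}{b-a}\le 1/b$ is false; for $a=n-2$, $b=n-1$ the left side is $1/2$. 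What is true is that the chains through both $A$ and $B$ form a $1/\binom ba\le 1/b$ fraction of the chains through $B$. But one top $B$ can lie above many minimal sets, and these fractions add up to $\sum_{A\subsetneq B,\,A\in\mathcal F}1/\binom ba$, which can equal $1$ (take all $(b-1)$-subsets of $B$).
\item Most seriously, your concluding inequality $\sum_{F\in\mathcal F}1/\binom n{|F|}\le 1+2/n$ is false for $V_2$-free families. The family $\binom{[n]}{n-1}\cup\{[n]\}$ has LYM sum $2$. Non-degenerate examples exist too: if $G$ is a triangle-free $d$-regular graph on an $N$-set $Y$, then $\{Y\setminus e:e\in E(G)\}\cup\{Y\setminus T:T\in\binom Y3\text{ spans at most one edge of }G\}$ is $V_2$-free. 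Its LYM sum is $1+\frac d{N-1}-\frac{3d(d-1)}{(N-1)(N-2)}$, which is about $1.09$ for three disjoint copies of $K_{10,10}$ (so $N=60$). Hence no argument routed through the plain LYM sum can give $M_n(1+2/n)$.
\end{itemize}

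The missing idea is to bound $|\mathcal F|$ directly, using the slack $\binom n{|F|}\le M_n$ for sets away from the middle. Put $g(F)=\binom n{|F|}/M_n\le1$. Then $|\mathcal F|/M_n$ is the expectation, over a uniform full chain $\mathcal C$, of $\sum_{F\in\mathcal C\cap\mathcal F}g(F)$. A chain meeting $\mathcal F$ once contributes at most $1$. A chain meeting it in $A\subsetneq B$ contributes at most $1+\min\{g(A),g(B)\}$, and the chains through such a pair have measure $1/\bigl(\binom na\binom{n-a}{b-a}\bigr)=1/\bigl(\binom nb\binom ba\bigr)$. Bound the excess of each pair by $\frac{2}{n}\binom na^{-1}$ in two cases:
\begin{itemize}
\item If $a\le n/2$ and $b<n$, use $\min\{g(A),g(B)\}\le1$ and $\binom{n-a}{b-a}\ge n-a\ge n/2$.
\item Otherwise use $g(B)$. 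The excess is then $1/\bigl(M_n\binom ba\bigr)$. Either $a>n/2$, so $\binom ba\ge b>n/2$ and $\binom na\le M_n$; or $b=n$, where $M_n\ge n/2$ suffices.
\end{itemize}
Each minimal set lies in at most one such pair, and the minimal sets form an antichain. So LYM applied to them gives $|\mathcal F|\le M_n\bigl(1+\frac2n\sum_A\binom na^{-1}\bigr)\le M_n(1+2/n)$. This also absorbs the edge case $B=[n]$, where your bound $1/\binom{n-a}{b-a}\le1/(n-a)$ fails.
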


Thanh considered the $r\ge2$ case. De Bonis and
Katona later improved the second-order term in
Thanh's upper bound, giving the following result.

\begin{theorem}[Thanh~\cite{Thanh1998}; De Bonis and Katona~\cite{DeBonisKatona2007}]\label{thm:known}
For every fixed integer $r\ge2$, as $n\to\infty$,
\[
 M_n\left(1+\frac{r-1}{n}+O_r(n^{-2})\right)
 \le\La(n,V_r)
 \le M_n\left(1+\frac{2(r-1)}n+O_r(n^{-2})\right).
\]
\end{theorem}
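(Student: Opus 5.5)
The statement combines two bounds, so the plan has two independent parts: an explicit construction for the lower bound and a chain-counting (Lubell-type) argument for the upper bound. Throughout put $k=\lfloor n/2\rfloor$.

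\emph{Lower bound.} Take $\mathcal F=\binom{[n]}{k}\cup\mathcal G$ with $\mathcal G\subseteq\binom{[n]}{k+1}$. Members of $\mathcal G$ have no proper supersets in $\mathcal F$. Two distinct $(k+1)$-sets are incomparable, so no $k$-set lies in another $k$-set. Hence $\mathcal F$ is $r$-fork-free exactly when every $k$-set is contained in at most $r-1$ members of $\mathcal G$. For a residue $i\in\mathbb Z_n$, let $\mathcal G_i$ be the set of $(k+1)$-sets whose element sum is $\equiv i\pmod n$. A fixed $k$-set $S$ lies in a member $S\cup\{x\}$ of $\mathcal G_i$ only if $x\equiv i-\sum S\pmod n$, and this determines $x$ uniquely. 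So $S$ lies in at most one member of each $\mathcal G_i$. The classes $\mathcal G_0,\dots,\mathcal G_{n-1}$ partition $\binom{[n]}{k+1}$, so the union of the $r-1$ largest classes has size at least $\frac{r-1}{n}\binom{n}{k+1}$. Finally $\binom{n}{k+1}/M_n=(n-k)/(k+1)=1+O(1/n)$, which gives $|\mathcal F|\ge M_n\bigl(1+\frac{r-1}{n}+O_r(n^{-2})\bigr)$.

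\emph{Upper bound.} Let $\mathcal F$ be $r$-fork-free. Split it into the maximal members $\mathcal A$, which form an antichain, and the rest $\mathcal B$. Each $B\in\mathcal B$ has between $1$ and $r-1$ proper supersets in $\mathcal F$, and one of them lies in $\mathcal A$. I would run the Lubell chain count over all $n!$ maximal chains. For a chain $C$ meeting $\mathcal F$, look at the members of $\mathcal F$ on $C$. They form a chain $F_1\subset\dots\subset F_t$ with $t\le r$, since $F_1$ lies in all the others.

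The aim is to give each chain total weight at most $1+O_r(1/n)$ in such a way that every $F\in\mathcal F$ receives weight about $|F|!\,(n-|F|)!$. Following De Bonis and Katona, give full weight to the top element $F_t\in\mathcal A$ of each chain. Each extra element $F_j$ with $j<t$ is charged not to $C$ itself but to the chains passing through $F_j$ and then through a superset of size $|F_j|+1$ that avoids every member of $\mathcal F$ above $F_j$. Since $F_j$ has at most $r-1$ supersets in $\mathcal F$, this fails only for a proportion $O_r(1/(n-|F_j|))$ of such chains.

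This yields an inequality of the form
\[
 \sum_{A\in\mathcal A}\binom{n}{|A|}^{-1}
 +\Bigl(1-O_r(n^{-1})\Bigr)\sum_{B\in\mathcal B}\binom{n}{|B|}^{-1}
 \le 1+(r-1)\,\max_{B}\frac{c}{n-|B|}
\]
for a suitable constant $c$. The two cases are then: sets of size far from $n/2$ contribute only $o(M_n/n)$ by a standard binomial-tail estimate, and sets near the middle contribute at most $M_n\bigl(1+\frac{2(r-1)}{n}\bigr)$. In the near-middle case the factor $2$ comes from the fact that a $k$-set has about $n/2$ upward neighbours, so its $r-1$ superset-constraints cost about $2(r-1)/n$ of a chain.

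\emph{Main obstacle.} The hard step is the double-counting lemma that makes this charging rigorous: showing that chains through elements of $\mathcal B$ overlap with chains whose top element is in $\mathcal A$ only in a set of relative measure $2(r-1)/n+O_r(n^{-2})$. I would first try the circular-permutation version of Katona's method, where each cyclic order has only $n$ intervals. There the bound amounts to showing that a cyclic interval family with no $r$-fork has size at most $n\bigl(1+\frac{2(r-1)}{n}\bigr)$ when restricted to intervals of lengths near $n/2$. This is a finite combinatorial statement on intervals, which I would prove by inspecting, for each interval of length $\ell$, its two one-step extensions.
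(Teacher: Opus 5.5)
Your lower bound is correct. The paper recalls Thanh's version, which uses one residue class modulo $q=\lceil n/(r-1)\rceil$, each class meeting $[n]$ in at most $r-1$ points. You instead take the $r-1$ largest classes modulo $n$. This is the same device as the paper's $\mathcal C_a$, and it gives the same gain $\frac{r-1}{n}\binom{n}{k+1}$.

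The paper does not prove the upper bound; it quotes De Bonis and Katona. Your outline for it has a genuine gap, and the step you call the main obstacle rests on a false lemma. On a cycle of length $n$, the $2n$ intervals of lengths $k$ and $k+1$ form an $r$-fork-free family for every $r\ge3$, since each $k$-interval lies in exactly two $(k+1)$-intervals. For $r=2$, all $k$-intervals together with every second $(k+1)$-interval already give about $3n/2$ intervals. So no bound of the form $n+2(r-1)$ holds, and the cyclic method in this form only yields $|\mathcal F|=O(M_n)$.

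The chain-counting part can be repaired, but not as written.
\begin{itemize}
\item The top member of $\mathcal F\cap C$ need not be maximal in $\mathcal F$, so it need not lie in $\mathcal A$. The right bookkeeping is to assign each maximal chain to its top $\mathcal F$-member; the chain classes are then automatically disjoint.
\item Your loss estimate does not follow from ``at most $r-1$ supersets'' if you test only the next set $F_j\cup\{x\}$. The bad $x$ form $\bigcup_{F'}(F'\setminus F_j)$, which is almost all of $[n]\setminus F_j$ if, say, $[n]\setminus\{y\}\in\mathcal F$.
\item What is true is this: a uniformly random maximal chain through $F$ meets a fixed $F'\supsetneq F$ with probability $\binom{n-|F|}{|F'\setminus F|}^{-1}\le\frac1{n-|F|}$, unless $F'=[n]$. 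After discarding $[n]$ this gives $\sum_{F\in\mathcal F}\binom{n}{|F|}^{-1}\max\{0,1-\frac{r-1}{n-|F|}\}\le1$. Your displayed inequality is not what the argument yields: its uniform factor $1-O_r(n^{-1})$ is false near the top levels, and its extra right-hand term is unexplained.
\item Your near/far split aims only at $o(M_n/n)$, which is weaker than the stated $O_r(n^{-2})$. It also cannot reach $O_r(n^{-2})$. A set $j$ levels above the middle loses $\frac{r-1}{n/2-j}$ rather than $\frac{2(r-1)}{n}$. The binomial tail beyond $j$ levels is $O(M_nn^{-2})$ only once $j$ is of order $\sqrt{n\log n}$, so a crude window leaves an error of order $n^{-3/2}\sqrt{\log n}$.
\end{itemize}

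What you need is the exact trade-off between level size and loss. Discard the $O_r(n^{2r})$ sets with $n-|F|\le 2r-2$. Then check that $\binom nk(1-\frac{r-1}{n-k})^{-1}$ is nonincreasing for $\lceil n/2\rceil\le k\le n-2r+1$: with $a=n-k$, the ratio of consecutive terms is $\frac{(a-1)(a-r+1)}{(k+1)(a-r)}\le1$. Below the middle the expression is no larger, since the binomial coefficient is smaller and the loss is smaller. This gives
\[
|\mathcal F|\le M_n\Bigl(1-\frac{r-1}{\lfloor n/2\rfloor}\Bigr)^{-1}+O_r(n^{2r})=M_n\Bigl(1+\frac{2(r-1)}n+O_r(n^{-2})\Bigr).
\]
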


There is a gap between the lower and upper bounds in these two theorems.
We are interested in constructing large $r$-fork-free families. We first
briefly recall the previous constructions.

Graham and Sloane~\cite{GrahamSloane1980} used the following construction.
Let $G$ be a finite abelian group of order $n$. For $a\in G$ and
$1\le h\le n$, define
\[
 \mathcal G_a(h)=
 \left\{A\in\binom Gh:\sum_{x\in A}x=a\right\}.
\]
Then $\sum_{a\in G}|\mathcal G_a(h)|=\binom nh$. So there is an
$a_0\in G$ such that
\[
 |\mathcal G_{a_0}(h)|
 =\max_{a\in G}|\mathcal G_a(h)|\ge\frac1n\binom nh.
\]
For an $(h-1)$-set $A\subseteq G$, an extension $A\cup\{x\}$ belongs to
$\mathcal G_{a_0}(h)$ only if
$x=a_0-\sum_{y\in A}y$. Thus there is at most one such extension.
Taking $h=\lfloor n/2\rfloor+1$, the family
$\binom G{h-1}\cup\mathcal G_{a_0}(h)$ is $2$-fork-free and gives the
lower bound in Theorem~\ref{thm:two-fork}.

Thanh~\cite{Thanh1998} extended this construction. Put
$q=\lceil n/(r-1)\rceil$. For $a\in\{0,\ldots,q-1\}$ and $1\le h\le n$,
define
\[
 \mathcal G_r(a,h)=
 \left\{A\in\binom{[n]}h:\sum_{x\in A}x\equiv a\pmod q\right\}.
\]
Then $\sum_{a=0}^{q-1}|\mathcal G_r(a,h)|=\binom nh$. So there is an
$a_0\in\{0,\ldots,q-1\}$ such that
\[
 |\mathcal G_r(a_0,h)|\ge\frac1q\binom nh.
\]
For each $(h-1)$-set $A$, all elements $x\in[n]\setminus A$ for which
$A\cup\{x\}\in\mathcal G_r(a_0,h)$ lie in one residue class modulo $q$.
Every such class in $[n]$ has at most $\lceil n/q\rceil\le r-1$ elements.
Hence $\binom{[n]}{h-1}\cup\mathcal G_r(a_0,h)$ is $r$-fork-free.
Taking $h=\lfloor n/2\rfloor+1$ gives the lower bound in
Theorem~\ref{thm:known}.

In particular, Theorem~\ref{thm:known} gives
\[
 1\le\liminf_{n\to\infty}
 \frac{n(\La(n,V_r)-M_n)}{(r-1)M_n}
 \le\limsup_{n\to\infty}
 \frac{n(\La(n,V_r)-M_n)}{(r-1)M_n}\le2.
\]
These bounds do not determine whether the limit exists. Our main result
determines the limit and the second-order term.

\begin{theorem}\label{thm:main}
For every fixed integer $r\ge2$,
\[
 \La(n,V_r)=M_n\left(1+\frac{2(r-1)}n+o(n^{-1})\right)
\]
as $n\to\infty$.
\end{theorem}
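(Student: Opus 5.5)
The upper bound in Theorem~\ref{thm:main} is already given by Theorem~\ref{thm:known}, since $O_r(n^{-2})=o(n^{-1})$. So the whole task is the lower bound
\[
\La(n,V_r)\ge M_n\left(1+\frac{2(r-1)}n-\varepsilon\, n^{-1}\right)
\]
for every fixed $\varepsilon>0$ and all sufficiently large $n$. Like the constructions of Graham--Sloane and Thanh, I will take $h=\lfloor n/2\rfloor+1$ and use a family $\binom{[n]}{h-1}\cup\mathcal G$ with $\mathcal G\subseteq\binom{[n]}h$. Such a family is $r$-fork-free as soon as every $(h-1)$-set $A$ lies in at most $r-1$ members of $\mathcal G$. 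This holds because a member of level $h$ is contained in no other member, and a member of level $h-1$ is contained only in level-$h$ members.

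First I settle the target size by double counting. Counting pairs $(A,S)$ with $A\subset S\in\mathcal G$ gives
\[
h|\mathcal G|\le (r-1)\binom n{h-1},
\]
so $|\mathcal G|\le (r-1)\binom n{h-1}/h=(2(r-1)/n)M_n(1+O(1/n))$. The goal is therefore an \emph{almost perfect} packing: all but an $\varepsilon$-fraction of $(h-1)$-sets must have exactly $r-1$ extensions in $\mathcal G$.

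The Graham--Sloane family is a good diagnostic. For a given $A$, the forced element $x=a_0-\sum_{y\in A}y$ falls outside $A$ only about half the time, which is exactly why it loses the factor $2$. The fix I would pursue is to give each $A$ several ``chances''.

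Fix in advance, depending only on $\varepsilon$ and $r$, a finite ordered list of disjoint coordinate blocks $B_1,\dots,B_m$ of bounded sizes, with the rest of $[n]$ treated as free coordinates. For each $j$, define a rule $R_j$ that assigns to an $(h-1)$-set $A$ a forced target, namely an element of a fixed residue class modulo $n$ determined by sums of $A$. Make the targets of distinct rules lie in disjoint regions determined by the blocks. Then let $S\in\mathcal G$ if and only if there is an index $j$ such that:
\begin{itemize}
\item $S$ satisfies the sum condition of $R_j$;
\item the pattern of $S\cap(B_1\cup\dots\cup B_{j-1})$ certifies that the earlier rules $R_1,\dots,R_{j-1}$ failed for $S$ minus any element.
\end{itemize}
The key point is that this certificate is stated in terms of $S$ alone. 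Since whether $x\in A$ is, heuristically, an independent fair coin for each rule, the probability that all $m$ rules fail is about $2^{-m}$, which is below $\varepsilon$ for $m$ fixed large.

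To reach $r-1$ instead of $1$, I would run $r-1$ such independent rule-chains on disjoint sets of blocks, in the spirit of Thanh's residue-class trick. Alternatively, I would allow each forced residue class to contain $r-1$ elements, as with $q=\lceil n/(r-1)\rceil$.

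The verification then has two parts. The degree bound, at most $r-1$ extensions per $A$, follows from the disjointness of the target regions and the uniqueness of the forced element under each sum condition. The size estimate comes from an averaging argument over the shifts $a_j$, exactly as in the displayed averaging $\sum_a|\mathcal G_a(h)|=\binom nh$. This shows that for a suitable choice of shifts, the fraction of $(h-1)$-sets receiving their full $r-1$ extensions is at least $1-O(2^{-m})-o(1)$.

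The main obstacle is making the ``earlier rules failed'' condition intrinsic to $S$ rather than to the pair $(A,S)$. Different $A\subset S$ see different forced elements, so a careless definition either double-counts extensions and breaks the bound $r-1$, or discards too many sets. I would first try to handle this by letting the block intersections $S\cap B_i$ carry the information. Because blocks have bounded size and $A$ differs from $S$ in only one element, the pattern of $S\cap B_i$ and of $A\cap B_i$ differ in at most one block. This should let the condition be decided consistently up to an $O(1/n)$-fraction of exceptional sets.
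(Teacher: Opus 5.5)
Your treatment of the upper bound is fine, but the lower-bound plan has a genuine gap. Keeping all of $\binom{[n]}{h-1}$ forces $\mathcal G$ to be an almost perfect $(r-1)$-fold packing of the $(h-1)$-sets. For $r=2$ this says exactly that distance-$4$ constant-weight codes of weight $h\approx n/2$ asymptotically meet the packing bound $\binom{n}{h-1}/h\sim 2M_n/n$, i.e.\ beat the Graham--Sloane construction by a factor $2$. That is a much stronger statement than the theorem and a notoriously hard one. The step you call the ``main obstacle'' is the entire difficulty.

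Your mechanism also cannot supply it, as far as I can see. A set admitted for one $A\subset S$ covers all $h$ of its $(h-1)$-subsets, and the ``independent fair coins'' heuristic ignores this.
\begin{itemize}
\item Suppose membership of $S$ depended only on the traces $S\cap B_i$. Then every subset $S\setminus\{y\}$ with $y$ outside the blocks would have about $n/2$ extensions in $\mathcal G$, since adding any free element keeps the trace. So with the full lower level kept, each rule needs a global condition such as your sum condition.
\item To leave at most $r-1$ candidates per $A$, the accepted residues can cover at most $r-1$ elements of $[n]$. So the modulus is of order $n$.
\item Then, for all but an $O(1/n)$-fraction of the $A$'s, the forced element $x_1(A)$ lies outside the bounded blocks. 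The event ``$R_1$ failed'', i.e.\ $x_1(A)\in A$, is therefore invisible in $S\cap(B_1\cup\dots\cup B_{j-1})$.
\item Consequently, either several rules fire on a constant fraction of the $A$'s, breaking the bound $r-1$, or each $A$ effectively gets one rule. The latter only reproduces Thanh's coefficient $r-1$.
\end{itemize}

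The missing idea is to give up part of the lower level. The paper deletes every $(h-1)$-set whose trace on $X$ lies in $\cU$. This is a fixed positive fraction, about $u\ge 2^{-d}$, of that level. It then adds every $h$-set with trace in $\cU$.
\begin{itemize}
\item Each remaining $(h-1)$-set with trace outside $\cV$ misses exactly $r-1$ elements of the first block $X_i$ in which it misses at most $r-1$ elements.
\item Such a set receives exactly its $r-1$ completions inside $X_i$, by a rule that depends only on traces and uses no sums or certificates.
\item The collateral sets that a trace-only rule would overload are precisely the deleted ones.
\item Sums modulo $n$ are used only for traces in $\cV$, whose weight $v$ is driven to $0$.
\end{itemize}
The swap changes nothing at scale $M_n$. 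It actually gains $\sum_{P\in\cU}(U_P-L_P)\approx\frac{M_n}{n}\bigl(2\alpha+2(\tau-1)u\bigr)$, which is positive because traces in $\cU$ are heavy. Each appended block raises $c_{r,\tau}=2\alpha+2(\tau-1)u+(r-1)v$ by $(v-v')\rho_{r,\tau}(m,\beta)$, with $\rho_{r,\tau}\approx r-1$. So $c_{r,\tau}$ climbs from $r-1$ to $2(r-1)-\varepsilon$. The extremal family is therefore not a sparse perturbation of a full level, and this is how the paper avoids the packing problem your plan runs into.
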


This paper is organized as follows. Section~\ref{sec:construction} gives
the construction. Section~\ref{sec:parameters} defines the parameters
used to count its members and proves their recurrence relations.
Section~\ref{sec:growth} proves Theorem~\ref{thm:main} by a finite
sequence of extensions of the tuple.

\section{Construction}\label{sec:construction}

Let $r\ge2$, $n\ge2$ and $\ell\ge1$ be integers. Put
\[
 h=\left\lfloor\frac n2\right\rfloor+1,\qquad
 \tau=n-2(h-1)\in\{0,1\}.
\]
Fix an ordered tuple $\bm=(m_1,\ldots,m_\ell)$ of integers with
$m_i\ge r$ for $1\le i\le\ell$, and let $d=\sum_{i=1}^{\ell}m_i$.
Assume that $n\ge\max\{2,d,r-1\}$. Choose pairwise disjoint blocks
$X_1,\ldots,X_\ell\subseteq[n]$ with $|X_i|=m_i$, and put
$X=\bigcup_{i=1}^{\ell}X_i$. Then $|X|=d$. We keep this ordered partition of $X$
fixed throughout the construction. For any set $P\subseteq [n]$, we call $P\cap X$ the $trace~of~ P~ on~ X$ or simply $trace$. Similarly, for $\mathcal{P}\subseteq 2^{[n]}$, we call $\{P\cap X:P\in\mathcal{P}\}$ the $traces~of~\mathcal{P}~ on~ X$ or simply $traces$.

For $P\subseteq X$ and $1\le i\le\ell$, define the number of missing
elements in the $i$-th block by
\[
 \delta_i(P)=|X_i\setminus P|=m_i-|P\cap X_i|.
\]
Using $\delta_i(P)=r-1$ as the boundary, define
\[
 \cU_i=\{P\subseteq X:\delta_j(P)\ge r\text{ for }1\le j<i,
                         \ \delta_i(P)\le r-2\},
\]
and put
\begin{equation}\label{eq:trace-families}
 \cU=\bigcup_{i=1}^{\ell}\cU_i,\qquad
 \cV=\{P\subseteq X:\delta_i(P)\ge r
                        \text{ for }1\le i\le\ell\}.
\end{equation}
There is no restriction on the blocks after $X_i$ in the definition
of $\cU_i$. The families $\cU_1,\ldots,\cU_\ell,\cV$ are pairwise
disjoint: if $i<j$, a member of $\cU_i$ has $\delta_i(P)\le r-2$,
whereas a member of $\cU_j$ or $\cV$ has $\delta_i(P)\ge r$.

Fix a bijection $\iota:[n]\to\mathbb Z_n$, where $\mathbb Z_n$ is the
additive group of integers modulo $n$. Define
$\sigma:2^{[n]}\to\mathbb Z_n$ by
\[
 \sigma(A)=\sum_{x\in A}\iota(x)\qquad(A\subseteq[n]).
\]
In particular, $\sigma(\emptyset)=0$. Select a fixed set
$C\subseteq\mathbb Z_n$ with $|C|=r-1$. For the fixed tuple $\bm$
and blocks above, define
\begin{align}
 \mathcal A&=\left\{A\in\binom{[n]}{h-1}:A\cap X\notin\cU\right\},
                                                        \notag\\
 \mathcal B&=\left\{A\in\binom{[n]}h:A\cap X\in\cU\right\},
                                                        \notag\\
 \mathcal C_a&=\left\{A\in\binom{[n]}h:A\cap X\in\cV,\
                          \sigma(A)\in a+C\right\}
                          \quad(a\in\mathbb Z_n),          \label{eq:parts}
\end{align}
where $a+C=\{a+c:c\in C\}$. Our construction is
\begin{equation}\label{eq:family}
 \mathcal F_a(\bm,X)=\mathcal A\cup\mathcal B\cup\mathcal C_a.
\end{equation}
These three parts are pairwise disjoint. Indeed, $\mathcal A$ lies
in the $(h-1)$-th level, while $\mathcal B$ and $\mathcal C_a$ lie in
the $h$-th level and have traces in the disjoint families $\cU$ and
$\cV$.

\begin{lemma}\label{lem:free}
For every tuple $\bm$ and choice of blocks satisfying the conditions
above, and every $a\in\mathbb Z_n$, the family $\mathcal F_a(\bm,X)$
is $r$-fork-free.
\end{lemma}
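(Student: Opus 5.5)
The plan is to exploit the two-level structure. Every member of $\mathcal F_a(\bm,X)$ has size $h-1$ or $h$, so a member of size $h$ cannot be strictly contained in any other member. Hence a forbidden configuration must have $F\in\mathcal A$ with $|F|=h-1$, and its $r$ strict supersets must be of the form $F\cup\{x\}\in\mathcal B\cup\mathcal C_a$ with $x\notin F$. It therefore suffices to show that for each $F\in\mathcal A$ there are at most $r-1$ elements $x\in[n]\setminus F$ with $F\cup\{x\}\in\mathcal B\cup\mathcal C_a$.

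Put $P=F\cap X$, so $P\notin\cU$. Adding $x$ changes the trace to $P$ if $x\notin X$, and to $P\cup\{x\}$ if $x\in X_i$. In the latter case only $\delta_i$ changes, and it drops by exactly one. I will split into two cases according to whether $P\in\cV$.

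\emph{Case $P\in\cV$.} Here $\delta_j(P)\ge r$ for all $j$. Suppose $x\in X_i$. Then $\delta_i(P\cup\{x\})\ge r-1$, so the new trace is not in $\cU_i$. It is also not in $\cU_k$ for $k>i$, since that requires $\delta_i\ge r$ and $\delta_k\le r-2$, but $\delta_k$ is unchanged and $\geq r$. Nor is it in $\cU_k$ for $k<i$, since $\delta_k$ is unchanged and $\ge r$. If $x\notin X$, the trace stays $P\notin\cU$. So no extension lies in $\mathcal B$, and every admissible $x$ must give $F\cup\{x\}\in\mathcal C_a$. This requires $\sigma(F)+\iota(x)\in a+C$. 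Since $\iota$ is a bijection and $|a+C|=r-1$, at most $r-1$ elements $x$ qualify.

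\emph{Case $P\notin\cV$.} Let $i$ be the least index with $\delta_i(P)\le r-1$. Then $\delta_j(P)\ge r$ for $j<i$, and $P\notin\cU_i$ forces $\delta_i(P)=r-1$. Every extension has a trace $P'\supseteq P$ with $\delta_i(P')\le r-1$, so $P'\notin\cV$ and no extension lies in $\mathcal C_a$. For extensions in $\mathcal B$ we need $P'\in\cU$, and I check the possible positions of $x$.
\begin{itemize}
\item If $x\notin X$, or $x\in X_j$ with $j>i$, then $\delta_1,\dots,\delta_i$ are unchanged. So $P'\notin\cU_k$ for $k\le i$ (for $k<i$ since $\delta_k\ge r$, for $k=i$ since $\delta_i=r-1$), and $P'\notin\cU_k$ for $k>i$ since that needs $\delta_i\ge r$.
\item If $x\in X_j$ with $j<i$, then $\delta_j(P')\ge r-1$ while all other $\delta$'s are unchanged. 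This rules out $\cU_j$ (needs $\delta_j\le r-2$), every $\cU_k$ with $k<j$ (needs $\delta_k\le r-2$ but $\delta_k\ge r$), and every $\cU_k$ with $k>j$ (needs $\delta_j\ge r$ unless $\delta_j(P')\ge r$, in which case the earlier argument for unchanged indices applies).
\end{itemize}
Hence only $x\in X_i\setminus P$ can work, and there are exactly $\delta_i(P)=r-1$ such elements.

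In both cases $F$ has at most $r-1$ strict supersets in the family, which proves the lemma. I expect the main obstacle to be the bookkeeping in the second case. One must check that an extension inside an earlier block $X_j$ cannot land in some $\cU_k$, using that $\delta_j$ drops by only one, from $\ge r$ to $\ge r-1$, which never reaches $r-2$. This is precisely why $r-1$ was chosen as the boundary value.
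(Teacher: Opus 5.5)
Your proof is correct and follows the same route as the paper's: you reduce to one-element extensions $F\cup\{x\}$ of a member of $\mathcal A$, then split on whether $F\cap X\in\cV$, using injectivity of $\iota$ in one case and the least index $i$ with $\delta_i(F\cap X)=r-1$ in the other. Your treatment of $x\in X_j$ with $j<i$ is in fact slightly more careful than the paper's, which asserts $\delta_j\bigl((A\cup\{x\})\cap X\bigr)\ge r$ even though this value can equal $r-1$; you correctly handle both possibilities.
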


\begin{proof}
By~\eqref{eq:family}, every member of $\mathcal F_a(\bm,X)$ has size
$h-1$ or $h$. A member of $\mathcal B\cup\mathcal C_a$ has no strict
superset in this family. Thus only a member of $\mathcal A$ could
be the lower set of an $r$-fork. Fix $A\in\mathcal A$. Then $A\cap X\notin\cU$.
We count the
elements $x\in[n]\setminus A$ for which
$A\cup\{x\}\in\mathcal F_a(\bm,X)$, and prove that there are at most
$r-1$ such elements. There are two cases.

\smallskip
\Needspace{4\baselineskip}
\noindent\textbf{Case 1.} $A\cap X\notin\cV$.

In this case, there is $1\le i\le \ell$ such that $\delta_i(A\cap X)\le r-1$.
Let $i$ be the smallest index such that $\delta_i(A\cap X)\le r-1$.
Then $\delta_j(A\cap X)\ge r$ for every $1\le j<i$. Since
$A\cap X\notin\cU$, we have $\delta_i(A\cap X)=r-1$.
For each $x\in X_i\setminus A$, we have
\[
 \delta_j((A\cup\{x\})\cap X)=\delta_j(A\cap X)\ge r
       \quad(1\le j<i),
 \qquad
 \delta_i((A\cup\{x\})\cap X)=r-2.
\]
Thus $(A\cup\{x\})\cap X\in\cU_i$ and then $A\cup\{x\}\in\mathcal B$.
There are exactly $|X_i\setminus A|=r-1$ such $x$.

Now let $x\in[n]\setminus(A\cup X_i)$ and we consider $A\cup \{x\}$. If $x\notin X$, the trace of $A\cup\{x\}$
does not change and is outside $\cU\cup\cV$. If there is $j<i$ such that $x\in X_j$, then $\delta_j((A\cup\{x\})\cap X)\ge r-1$. Since $j<i$, by the choice of $i$, we have $\delta_j((A\cup\{x\})\cap X)\ge r$. So $(A\cup\{x\})\cap X\notin \cU\cup\cV$. If there is $j>i$ such that $x\in X_j$, $\delta_i((A\cup\{x\})\cap X)=r-1$ and $(A\cup\{x\})\cap X\notin \cU\cup\cV$.
Therefore none of these extensions belongs to
$\mathcal F_a(\boldsymbol m,X)$.

Thus exactly $r-1$ extensions are selected in this case.

\smallskip
\Needspace{4\baselineskip}
\noindent\textbf{Case 2.} $A\cap X\in\cV$.

In this case,
for every $1\le i\le\ell$, we have $\delta_i(A\cap X)\ge r$. Then for every $x\in[n]\setminus A$, we have $\delta_i((A\cup\{x\})\cap X)\ge r-1$ which implies
$(A\cup\{x\})\cap X\notin\cU$.
Hence an extension can belong to $\mathcal F_a(\bm,X)$ only if it
belongs to $\mathcal C_a$. This requires
\[
 \sigma(A)+\iota(x)=\sigma(A\cup\{x\})\in a+C,
\]
or equivalently $\iota(x)\in a-\sigma(A)+C$. This last set has
$r-1$ elements, and $\iota$ is injective. Therefore at most $r-1$
extensions are selected in this case as well.
\end{proof}

\section{Counting parameters}\label{sec:parameters}

Fix a tuple $\bm=(m_1,\ldots,m_\ell)$ and blocks
$X_1,\ldots,X_\ell$ as in Section~\ref{sec:construction}.
Recall the definitions of $X$, $d=\sum_{i=1}^{\ell}m_i$,
$h=\lfloor n/2\rfloor+1$, $\tau$, $\cU$ and $\cV$.
For $P\subseteq X$, put
\[
 L_P=\binom{n-d}{h-1-|P|},\qquad
 U_P=\binom{n-d}{h-|P|}.
\]
Here $L_P$ and $U_P$ count, respectively, the $(h-1)$-sets and
$h$-sets whose trace on $X$ equals $P$. We take $\binom{N}{k}=0$ when $0\le k\le N$ does not hold.

\begin{lemma}\label{lem:average}
For every fixed tuple $\bm$ and blocks as above, there is an
$a\in\mathbb Z_n$ such that
\begin{equation}\label{eq:average}
 |\mathcal F_a(\bm,X)|\ge
 M_n+\sum_{P\in\cU}(U_P-L_P)+\frac{r-1}{n}\sum_{P\in\cV}U_P.
\end{equation}
\end{lemma}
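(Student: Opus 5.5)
We compute $|\mathcal A|$ and $|\mathcal B|$ exactly by sorting sets according to their traces on $X$, and bound $\max_a|\mathcal C_a|$ by averaging over $a\in\mathbb Z_n$, in the same way as the Graham--Sloane and Thanh constructions recalled in the introduction.

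\textbf{Levels $h-1$ and $h$.} Every $(h-1)$-set $A\subseteq[n]$ is counted by exactly one trace $P=A\cap X$, and for fixed $P$ there are exactly $L_P$ such sets. Hence
\[
 \sum_{P\subseteq X}L_P=\binom n{h-1}=M_n,
\]
where the last equality holds because $h-1=\lfloor n/2\rfloor$. It follows that
\[
 |\mathcal A|=M_n-\sum_{P\in\cU}L_P,\qquad |\mathcal B|=\sum_{P\in\cU}U_P.
\]
Adding these gives $|\mathcal A|+|\mathcal B|=M_n+\sum_{P\in\cU}(U_P-L_P)$.

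\textbf{Averaging over $a$.} Let $\mathcal H=\{A\in\binom{[n]}h:A\cap X\in\cV\}$. Grouping by trace again, $|\mathcal H|=\sum_{P\in\cV}U_P$. We compute $\sum_{a\in\mathbb Z_n}|\mathcal C_a|$ by double counting. A fixed $A\in\mathcal H$ lies in $\mathcal C_a$ exactly when $\sigma(A)\in a+C$, that is, when $a\in\sigma(A)-C$. Since $|C|=r-1$ and translation in $\mathbb Z_n$ is injective, there are exactly $r-1$ such values of $a$. Therefore
\[
 \sum_{a\in\mathbb Z_n}|\mathcal C_a|=(r-1)|\mathcal H|,
\]
and some $a$ satisfies $|\mathcal C_a|\ge\frac{r-1}{n}\sum_{P\in\cV}U_P$.

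\textbf{Conclusion.} The three parts $\mathcal A$, $\mathcal B$, $\mathcal C_a$ are pairwise disjoint, as noted after \eqref{eq:family}. So for this $a$ the size $|\mathcal F_a(\bm,X)|$ is the sum of the three counts, which gives \eqref{eq:average}.

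No step is a real obstacle. The only points needing care are the identity $\binom n{h-1}=M_n$, which follows from the choice of $h$ for both parities of $n$, and the fact that $\sigma(A)-C$ has exactly $r-1$ elements. The convention $\binom Nk=0$ outside $0\le k\le N$ makes the trace decomposition valid even when $|P|$ is large.
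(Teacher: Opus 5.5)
Your proof is correct and matches the paper's argument. You count $\mathcal A$ and $\mathcal B$ exactly by grouping sets by their trace on $X$, show each $h$-set with trace in $\cV$ lies in exactly $r-1$ of the families $\mathcal C_a$, and choose an $a$ whose $|\mathcal C_a|$ is at least the average (your explicit check that $\binom{n}{h-1}=M_n$ is a step the paper uses without comment).
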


\begin{proof}
Recall the three families in~\eqref{eq:parts}. The family $\mathcal A$
contains all $(h-1)$-sets except those with trace in $\cU$. For each
$P\in\cU$, exactly $L_P$ such sets have trace $P$, and distinct
traces give disjoint classes. Therefore
\[
 |\mathcal A|=M_n-\sum_{P\in\cU}L_P.
\]
Similarly, $\mathcal B$ consists of the $h$-sets with trace in $\cU$,
so
\[
 |\mathcal B|=\sum_{P\in\cU}U_P.
\]
Fix an $h$-set $A$ with $A\cap X\in\cV$. For each $c\in C$, the
unique translate for which $\sigma(A)=a+c$ is $a=\sigma(A)-c$.
These $r-1$ values of $a$ are distinct. Thus $A$ is counted in
exactly $r-1$ of the families $\mathcal C_a$. Hence
\[
 \sum_{a\in\mathbb Z_n}|\mathcal C_a|
 =(r-1)\sum_{P\in\cV}U_P.
\]
There is consequently an $a_0\in\mathbb Z_n$ for which
$|\mathcal C_{a_0}|\ge (r-1)n^{-1}\sum_{P\in\cV}U_P$.
Adding the three counts in~\eqref{eq:family} proves~\eqref{eq:average}.
\end{proof}

Recall that $\tau=n-2(h-1)\in\{0,1\}$. We use the following
asymptotic estimates.

\begin{lemma}\label{lem:traces}
For fixed $d$, as $n\to\infty$, the estimates
\begin{align}
 L_P&=M_n\bigl(2^{-d}+O_d(n^{-1})\bigr), \notag\\
 U_P&=M_n\bigl(2^{-d}+O_d(n^{-1})\bigr), \notag\\
 n(U_P-L_P)&=M_n\bigl(2^{1-d}(\tau-d-1+2|P|)
                                      +O_d(n^{-1})\bigr)       \label{eq:trace-estimates}
\end{align}
hold uniformly over $P\subseteq X$ and both values of $\tau$.
\end{lemma}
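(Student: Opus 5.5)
We estimate the binomial ratios directly. Write $N=n-d$ and $k=h-1-|P|$, so that $L_P=\binom{N}{k}$ and $U_P=\binom{N}{k+1}$. Since $|P|$ takes only the values $0,\ldots,d$, uniformity over $P$ and over $\tau\in\{0,1\}$ reduces to uniformity over finitely many offsets. Hence it suffices to prove each estimate for a single fixed value $p=|P|$ and a single $\tau$, with an error depending only on $d$.

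\emph{First two estimates.} Compare $\binom{n-d}{h-1-p}$ with $M_n=\binom{n}{h-1}$. Note that $h-1=\lfloor n/2\rfloor$. We go from $\binom{n}{h-1}$ to $\binom{n-d}{h-1-p}$ by $p$ steps of $\binom{m}{j}\to\binom{m-1}{j-1}$ and $d-p$ steps of $\binom{m}{j}\to\binom{m-1}{j}$. These multiply by the factors $j/m$ and $(m-j)/m$ respectively. Throughout, $m=n-O(d)$ and $j=n/2+O(d)$, so each factor equals $\tfrac12(1+O_d(n^{-1}))$. The product of the $d$ factors is therefore $2^{-d}(1+O_d(n^{-1}))$, which gives $L_P=M_n(2^{-d}+O_d(n^{-1}))$. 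For $U_P$ we use $U_P=L_P\cdot\frac{N-k}{k+1}$, and $\frac{N-k}{k+1}=1+O_d(n^{-1})$ because $N-k$ and $k+1$ both equal $n/2+O(d)$.

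\emph{Third estimate.} This is the substantive one, since the leading terms cancel and we need the first-order correction exactly. Write
\[
 U_P-L_P=L_P\left(\frac{N-k}{k+1}-1\right)=L_P\,\frac{N-2k-1}{k+1}.
\]
Compute the numerator exactly:
\[
 N-2k-1=n-d-2(h-1)+2p-1=\tau-d-1+2p.
\]
This is a constant bounded in terms of $d$. The denominator satisfies $k+1=n/2+O_d(1)$, so $n/(k+1)=2+O_d(n^{-1})$. Therefore
\[
 n(U_P-L_P)=L_P\,(\tau-d-1+2p)\bigl(2+O_d(n^{-1})\bigr).
\]
Substituting the first estimate $L_P=M_n(2^{-d}+O_d(n^{-1}))$ gives
\[
 n(U_P-L_P)=M_n\bigl(2^{1-d}(\tau-d-1+2|P|)+O_d(n^{-1})\bigr),
\]
because $|\tau-d-1+2p|\le 2d+2$ is bounded.

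\emph{Main obstacle.} Beyond the cancellation handled above, the only care needed is bookkeeping. We must check that no binomial coefficient degenerates: for large $n$ we have $0\le k<k+1\le N$, which holds once $n\gg d$. We must also check that every implicit constant depends only on $d$, which follows from the finiteness of the set of offsets $(p,\tau)$.
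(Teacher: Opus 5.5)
Your proposal is correct and takes essentially the same route as the paper. Your telescoping product of single-step ratios is the paper's falling-factorial ratio $(h-1)_w(n-h+1)_{d-w}/(n)_d$ written out factor by factor, and your treatment of $U_P/L_P$ matches the paper's: the exact numerator $\tau-d-1+2|P|$ together with $n/(h-|P|)=2+O_d(n^{-1})$.
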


\begin{proof}Given $P\subseteq X$,
put $w=|P|$. For a nonnegative integer $s$, write
$(z)_s=z(z-1)\cdots(z-s+1)$, with $(z)_0=1$.
For all sufficiently large $n$, expanding the binomial coefficients gives
\[
 \frac{L_P}{M_n}
 =\frac{(h-1)_w(n-h+1)_{d-w}}{(n)_d}
 =\frac{n^d(2^{-d}+O_d(n^{-1}))}
        {n^d(1+O_d(n^{-1}))}
 =2^{-d}+O_d(n^{-1}).
\]
The middle equality follows because $h-1=(n-\tau)/2$ and there
are only $d$ factors in each product. This also shows uniformity
for $0\le w\le d$ and $\tau\in\{0,1\}$.
The ratio of the two binomial coefficients gives
\[
 \frac{U_P}{L_P}
 =\frac{n-d-h+w+1}{h-w}
 =\frac{h+\tau-1+w-d}{h-w}
 =1+O_d(n^{-1}),
\]
which proves the estimate for $U_P$. Finally,
\begin{align*}
 U_P-L_P
 &=L_P\frac{\tau-d-1+2w}{h-w}\\
 &=\frac{M_n}{n}\bigl(2^{-d}+O_d(n^{-1})\bigr)
            \frac{n}{h-w}(\tau-d-1+2w)\\
 &=\frac{M_n}{n}
       \bigl(2^{1-d}(\tau-d-1+2w)+O_d(n^{-1})\bigr),
\end{align*}
where the last equality uses $n/(h-w)=2+O_d(n^{-1})$.
\end{proof}

For the fixed tuple $\bm$, define
\begin{align}
 u=u(\bm)&=\sum_{P\in\cU}2^{-d}=2^{-d}|\cU|,
 &v=v(\bm)&=\sum_{P\in\cV}2^{-d}=2^{-d}|\cV|, \notag\\
 \alpha=\alpha(\bm)&=\sum_{P\in\cU}(2|P|-d)2^{-d},
 &\beta=\beta(\bm)&=\frac1{|\cV|}
                         \sum_{P\in\cV}(2|P|-d).              \label{eq:parameters}
\end{align}
The empty set belongs to $\cV$ because $m_i\ge r$ for every $i$.
Hence $v>0$ and $\beta$ is well-defined. These parameters depend only
on $\bm$: bijections between corresponding blocks preserve the
trace families and the sizes of all traces.

\begin{proposition}\label{prop:coefficient}
For every fixed tuple $\bm$, there is an $a\in\mathbb Z_n$ such that
\begin{equation}\label{eq:fixed-count}
 |\mathcal F_a(\bm,X)|\ge
 M_n\left(1+\frac{c_{r,\tau}(\bm)+O_{r,\bm}(n^{-1})}{n}\right),
\end{equation}
where
\begin{equation}\label{eq:coefficient}
 c_{r,\tau}(\bm)=2\alpha+2(\tau-1)u+(r-1)v.
\end{equation}
\end{proposition}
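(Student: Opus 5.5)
Combine Lemma~\ref{lem:average} with the uniform estimates of Lemma~\ref{lem:traces}. With $\bm$ and the blocks fixed, $d$ is a constant, and $|\cU|,|\cV|\le 2^d$ are bounded independently of $n$. Hence summing the error terms $O_d(n^{-1})$ over all traces gives a total error of order $O_{r,\bm}(n^{-1})$. For all sufficiently large $n$, Lemma~\ref{lem:average} supplies an $a$ satisfying~\eqref{eq:average}. It therefore suffices to show that
\[
 n\sum_{P\in\cU}(U_P-L_P)+(r-1)\sum_{P\in\cV}U_P
 =M_n\bigl(c_{r,\tau}(\bm)+O_{r,\bm}(n^{-1})\bigr).
\]
The finitely many small $n$ are absorbed into the implicit constant. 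We may do this because $|\mathcal F_a|\ge0$, $M_n$ and $n$ are bounded there, and we may enlarge the constant in the $O$-term.

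For the first sum, apply~\eqref{eq:trace-estimates} termwise:
\[
 n\sum_{P\in\cU}(U_P-L_P)=M_n\Bigl(\sum_{P\in\cU}2^{1-d}(\tau-d-1+2|P|)+O_{\bm}(n^{-1})\Bigr).
\]
Split $\tau-d-1+2|P|=(2|P|-d)+(\tau-1)$. Then
\[
 \sum_{P\in\cU}2^{1-d}(2|P|-d)=2\alpha,\qquad \sum_{P\in\cU}2^{1-d}(\tau-1)=2(\tau-1)u,
\]
by the definitions~\eqref{eq:parameters}.

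For the second sum, the estimate $U_P=M_n(2^{-d}+O_d(n^{-1}))$ gives
\[
 (r-1)\sum_{P\in\cV}U_P=M_n\bigl((r-1)2^{-d}|\cV|+O_{r,\bm}(n^{-1})\bigr)=M_n\bigl((r-1)v+O_{r,\bm}(n^{-1})\bigr).
\]
Adding the two contributions, dividing by $n$, and inserting the result into~\eqref{eq:average} yields~\eqref{eq:fixed-count} with $c_{r,\tau}(\bm)=2\alpha+2(\tau-1)u+(r-1)v$.

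The only delicate point is the uniformity of the error terms. The constants must not depend on $\tau$ or on the particular trace $P$, and this is exactly what Lemma~\ref{lem:traces} guarantees. The error also must not grow with $n$ through the number of summands, which holds because the number of traces is at most $2^d$ for fixed $\bm$. Note that $\beta$ plays no role here. The whole argument is bookkeeping once these two points are checked.
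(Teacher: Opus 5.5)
Your proposal is correct and follows the paper's proof. You substitute the uniform estimates of Lemma~\ref{lem:traces} into the bound of Lemma~\ref{lem:average}, use the fact that there are at most $2^d$ traces to keep the summed error at $O_{r,\bm}(n^{-1})$, and identify the main terms with $2\alpha+2(\tau-1)u$ and $(r-1)v$; your remark about absorbing finitely many small $n$ into the implicit constant is a harmless detail that the paper leaves implicit.
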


\begin{proof}
Apply~\eqref{eq:trace-estimates} to~\eqref{eq:average}. Since $\bm$
is fixed, $d$, $|\cU|$ and $|\cV|$ are fixed. Thus the sum of the
error terms is still $O_{r,\bm}(n^{-1})$, and for some $a$,
\[
 |\mathcal F_a(\bm,X)|\ge
 M_n\left(1+\frac{
  \sum_{P\in\cU}2^{1-d}(\tau-d-1+2|P|)
  +(r-1)\sum_{P\in\cV}2^{-d}
  +O_{r,\bm}(n^{-1})}{n}\right).
\]
By~\eqref{eq:parameters},
\[
 \sum_{P\in\cU}2^{1-d}(\tau-d-1+2|P|)
 =2\alpha+2(\tau-1)u,\qquad
 (r-1)\sum_{P\in\cV}2^{-d}=(r-1)v.
\]
Substitution gives~\eqref{eq:fixed-count}.
\end{proof}

We next compare a tuple $\bm=(m_1,\ldots,m_\ell)$ with $\ell\ge1$,
and its extension $\bm'=(m_1,\ldots,m_\ell,m)$, where $m\ge r$.
Write $u',v',\alpha',\beta'$ for the parameters of $\bm'$.
Choose a new block $X_{\ell+1}$ of size $m$ disjoint from $X$ and
put $X'=X\cup X_{\ell+1}$. Define $\cU',\cV'$ from these blocks as
in~\eqref{eq:trace-families}. At this stage we work only with
finite block sets; there is no need to fix an value of $n$.
Put
\[
 \begin{aligned}
 D_m&=\sum_{j=0}^{r-1}\binom mj,&
 G_m&=\sum_{j=0}^{r-1}(m-2j)\binom mj,\\
 H_m&=\sum_{j=0}^{r-2}\binom mj,&
 E_m&=\sum_{j=0}^{r-2}(m-2j)\binom mj.
 \end{aligned}
\]

\begin{lemma}\label{lem:recurrence}
Let $\bm=(m_1,\ldots,m_\ell)$ with $\ell\ge1$ and $m_i\ge r$,
and let $\bm'=(m_1,\ldots,m_\ell,m)$ with $m\ge r$.
Their parameters satisfy
\begin{equation}\label{eq:recurrences}
 \begin{aligned}
 v'&=v(1-2^{-m}D_m),&
 u'&=u+v\,2^{-m}H_m,\\
 \alpha'&=\alpha+v\,2^{-m}(\beta H_m+E_m),&
 \beta'&=\beta-\frac{G_m}{2^m-D_m}.
 \end{aligned}
\end{equation}
Moreover, for $\tau\in\{0,1\}$,
\begin{equation}\label{eq:coefficient-increment}
 c_{r,\tau}(\bm')-c_{r,\tau}(\bm)
 =(v-v')\rho_{r,\tau}(m,\beta),
\end{equation}
where
\begin{equation}\label{eq:rho}
 \rho_{r,\tau}(m,\beta)
 =(r-1)+2(\beta+\tau-r)\frac{H_m}{D_m}.
\end{equation}
\end{lemma}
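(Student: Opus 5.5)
We will prove the recurrences by describing $\cU'$ and $\cV'$ directly through $\cU$ and $\cV$. A subset $P'\subseteq X'$ is determined by $P=P'\cap X$ and $Q=P'\cap X_{\ell+1}$, and $\delta_i(P')=\delta_i(P)$ for $i\le\ell$. The definitions in~\eqref{eq:trace-families} then give
\[
 \cU'_i=\{P\cup Q:P\in\cU_i,\ Q\subseteq X_{\ell+1}\}\ (i\le\ell),\qquad
 \cU'_{\ell+1}=\{P\cup Q:P\in\cV,\ |X_{\ell+1}\setminus Q|\le r-2\},
\]
and $\cV'=\{P\cup Q:P\in\cV,\ |X_{\ell+1}\setminus Q|\ge r\}$. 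Writing $j=|X_{\ell+1}\setminus Q|$, the number of $Q$ with a given $j$ is $\binom mj$, and then $|Q|=m-j$.

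\textbf{Counting $v'$ and $u'$.} In $\cV'$ the block $X_{\ell+1}$ contributes all $j\ge r$. Hence $|\cV'|=|\cV|(2^m-D_m)$, and multiplying by $2^{-d-m}$ gives $v'=v(1-2^{-m}D_m)$. Similarly, $|\cU'|=2^m|\cU|+|\cV|H_m$, so $u'=u+v2^{-m}H_m$.

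\textbf{Counting $\alpha'$ and $\beta'$.} For $P'=P\cup Q$ we have $2|P'|-(d+m)=(2|P|-d)+(2|Q|-m)$, with $2|Q|-m=m-2j$.
\begin{itemize}
\item Over the part of $\cU'$ coming from $\cU$: summing over all $Q$ kills the second term, because $\sum_Q(2|Q|-m)=0$ by symmetry. This part contributes $2^m\sum_{P\in\cU}(2|P|-d)$, and after the factor $2^{-d-m}$ it gives exactly $\alpha$.
\item Over $\cU'_{\ell+1}$: the sum is $H_m\sum_{P\in\cV}(2|P|-d)+|\cV|E_m=|\cV|(\beta H_m+E_m)$. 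Multiplying by $2^{-d-m}$ gives $v2^{-m}(\beta H_m+E_m)$.
\item For $\beta'$: summing over $\cV'$ gives $|\cV|\bigl((2^m-D_m)\beta+\sum_{j\ge r}(m-2j)\binom mj\bigr)$. Since $\sum_{j=0}^m(m-2j)\binom mj=0$, the last sum equals $-G_m$. Dividing by $|\cV'|=|\cV|(2^m-D_m)$ yields $\beta'$.
\end{itemize}

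\textbf{The increment identity.} Substituting into~\eqref{eq:coefficient} gives
\[
 c'-c=2v2^{-m}(\beta H_m+E_m)+2(\tau-1)v2^{-m}H_m-(r-1)v2^{-m}D_m .
\]
Here $v-v'=v2^{-m}D_m$, so we must show
\[
 2(\beta H_m+E_m)+2(\tau-1)H_m-(r-1)D_m=D_m\rho_{r,\tau}(m,\beta)=(r-1)D_m+2(\beta+\tau-r)H_m .
\]
After cancelling the $\beta$ and $\tau$ terms, this reduces to
\[
 E_m-(r-1)H_m=(r-1)D_m .
\]

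\textbf{Main obstacle.} This last identity is the step I expect to be hardest, because as written it is not obviously true.

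First, $E_m$ simplifies. Using $j\binom mj=m\binom{m-1}{j-1}$, we get $(m-2j)\binom mj=m\bigl(\binom{m-1}{j}-\binom{m-1}{j-1}\bigr)$. This telescopes to
\[
 E_m=m\binom{m-1}{r-2}=(r-1)\binom m{r-1}.
\]
Then
\[
 E_m-(r-1)H_m=(r-1)\Bigl(\binom m{r-1}-H_m\Bigr),
\]
whereas $(r-1)D_m=(r-1)\bigl(H_m+\binom m{r-1}\bigr)$. These do not agree, so the identity fails by $2(r-1)H_m$.

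So I would first recheck the sign conventions in the definition of $\rho$. The most likely culprit is the sign of the $(r-1)$ term in $c'-c$. If that term entered as $-(r-1)v'$ with the opposite orientation, or if $\rho$ should read $-(r-1)+\dots$, the computation would close. I would resolve this by testing a small case, for example $r=2$, $m=2$, directly before finalizing.
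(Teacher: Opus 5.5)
\textbf{Gap: a sign error in your final reduction leads you to reject a true identity.} Everything up to your formula for $c_{r,\tau}(\bm')-c_{r,\tau}(\bm)$ is correct and follows the paper's proof essentially line by line. This covers the decomposition $P'=P\cup Q$, the descriptions of $\cU'$ and $\cV'$, the four recurrences, and the expression
\[
 c_{r,\tau}(\bm')-c_{r,\tau}(\bm)=v\,2^{-m}\bigl(2(\beta H_m+E_m)+2(\tau-1)H_m-(r-1)D_m\bigr).
\]
After cancelling $2\beta H_m$ and $2\tau H_m$ from both sides of your target equation, what remains is
\[
 2E_m-2H_m-(r-1)D_m=(r-1)D_m-2rH_m .
\]
Moving $-2rH_m$ to the left gives $2E_m+2(r-1)H_m=2(r-1)D_m$, that is,
\[
 E_m+(r-1)H_m=(r-1)D_m ,
\]
not $E_m-(r-1)H_m=(r-1)D_m$. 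Since $D_m=H_m+\binom m{r-1}$, the correct reduction is exactly $E_m=(r-1)\binom m{r-1}$. Your own telescoping argument already proves this, so the proof closes with no change to $\rho$. This is how the paper finishes: it writes $\rho_{r,\tau}=2E_m/D_m+2(\beta+\tau-1)H_m/D_m-(r-1)$ and substitutes $E_m=(r-1)(D_m-H_m)$.

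Your suggested repairs, such as flipping the sign of the $(r-1)$ term in $\rho$, would make the lemma false. The small test you propose confirms the stated formula. Take $r=2$, $\bm=(2)$, $\bm'=(2,2)$ and $\tau=0$. Then $v=u=\tfrac14$, $\alpha=\tfrac12$, $\beta=-2$, and $v'=\tfrac1{16}$, $u'=\tfrac5{16}$, $\alpha'=\tfrac12$. Hence
\[
 c_{2,0}(\bm')-c_{2,0}(\bm)=\tfrac7{16}-\tfrac34=-\tfrac5{16}=\tfrac3{16}\cdot\bigl(-\tfrac53\bigr)=(v-v')\,\rho_{2,0}(2,-2),
\]
whereas replacing $(r-1)$ by $-(r-1)$ in $\rho$ would give $-\tfrac{11}{16}$. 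As submitted, the proposal does not establish the increment identity; it asserts the identity fails. So it is incomplete, although the fix is only that one line of algebra.
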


\begin{proof}
Every $P'\subseteq X'$ has a unique representation $P'=P\cup A$,
where $P\subseteq X$ and $A\subseteq X_{\ell+1}$. By definition,
$P'\in\cV'$ if and only if $P\in\cV$ and $|A|\le m-r$. Hence
\begin{equation}\label{eq:V-prime}
 |\cV'|=|\cV|\sum_{j=0}^{m-r}\binom mj
        =|\cV|(2^m-D_m).
\end{equation}
Using~\eqref{eq:parameters}, we obtain
\[
 v'=2^{-d-m}|\cV'|=v(1-2^{-m}D_m).
\]

\Needspace{5\baselineskip}
The member $P'$ of $\cU'$ falls into two disjoint classes:
\begin{enumerate}
 \item $P'=P\cup A$, where $P\in\cU$ and $A\subseteq X_{\ell+1}$;
 \item $P'=P\cup A$, where $P\in\cV$, $A\subseteq X_{\ell+1}$
       and $|A|\ge m-r+2$.
\end{enumerate}
The first class contains $2^m|\cU|$ sets. The second class  contains
$H_m|\cV|$ sets, because
$\sum_{j=m-r+2}^{m}\binom mj=H_m$. Therefore
\[
 u'=2^{-d-m}|\cU'|=u+v\,2^{-m}H_m.
\]

For the parameter $\alpha'$, the contribution of the first class is
\begin{align*}
 &2^{-d-m}\sum_{P\in\cU}\sum_{A\subseteq X_{\ell+1}}
                         (2|P|+2|A|-d-m)\\
 &\hspace{2em}=2^{-d}\sum_{P\in\cU}(2|P|-d)=\alpha.
\end{align*}
Here $\sum_{A\subseteq X_{\ell+1}}(2|A|-m)=0$, by pairing each
$A$ with its complement in $X_{\ell+1}$. Also,
\[
 \sum_{\substack{A\subseteq X_{\ell+1}\\|A|\ge m-r+2}}
             (2|A|-m)=E_m,
\]
as follows by writing $j=m-|A|$. Thus the second class contributes
\begin{align*}
 &2^{-d-m}\sum_{P\in\cV}
     \sum_{\substack{A\subseteq X_{\ell+1}\\|A|\ge m-r+2}}
                         (2|P|+2|A|-d-m)\\
 &\hspace{2em}=2^{-d-m}
     \left(H_m\sum_{P\in\cV}(2|P|-d)+|\cV|E_m\right)\\
 &\hspace{2em}=v\,2^{-m}(\beta H_m+E_m).
\end{align*}
Adding these contributions proves the recurrence for $\alpha'$.

Next, using~\eqref{eq:V-prime}, we have
\begin{align*}
 \beta'
 &=\frac1{|\cV'|}
      \sum_{P\in\cV}
      \sum_{\substack{A\subseteq X_{\ell+1}\\|A|\le m-r}}
                         (2|P|+2|A|-d-m)\\
 &=\beta+\frac1{2^m-D_m}
          \sum_{\substack{A\subseteq X_{\ell+1}\\|A|\le m-r}}
                         (2|A|-m)\\
 &=\beta-\frac{G_m}{2^m-D_m}.
\end{align*}
For the last equality, the sum over all $A\subseteq X_{\ell+1}$ is
zero, while the sum over $|A|\ge m-r+1$, on putting $j=m-|A|$,
equals $G_m$. The denominator is positive because $m\ge r$.

Substituting the four recurrences into~\eqref{eq:coefficient}, and
using $v-v'=v\,2^{-m}D_m$, gives~\eqref{eq:coefficient-increment}
with
\[
 \rho_{r,\tau}(m,\beta)
 =2\frac{E_m}{D_m}+2(\beta+\tau-1)\frac{H_m}{D_m}-(r-1).
\]
To simplify this expression, use the identity
\[
 (m-2j)\binom mj
 =m\left(\binom{m-1}j-\binom{m-1}{j-1}\right).
\]
Summing up to $r-2$ and up to $r-1$, respectively, yields
\begin{equation}\label{eq:binomial-identities}
 E_m=(r-1)\binom m{r-1},\qquad
 G_m=(m-r+1)\binom m{r-1}.
\end{equation}
Since $D_m=H_m+\binom m{r-1}$, it follows that
\begin{align*}
 \rho_{r,\tau}(m,\beta)
 &=2(r-1)\frac{D_m-H_m}{D_m}
       +2(\beta+\tau-1)\frac{H_m}{D_m}-(r-1)\\
 &=(r-1)+2(\beta+\tau-r)\frac{H_m}{D_m},
\end{align*}
which is~\eqref{eq:rho}.
\end{proof}

We will use the following estimates.

\begin{proposition}\label{prop:estimates}
For fixed $r\ge2$, as $m\to\infty$,
\begin{equation}\label{eq:block-asymptotics}
 \frac{H_m}{D_m}=\frac{r-1}{m}+O_r(m^{-2}),\qquad
 2^{-m}D_m\longrightarrow0.
\end{equation}
For the extension in Lemma~\ref{lem:recurrence}, if $2^{-m}D_m\le1/2$,
then
\begin{equation}\label{eq:beta-drift}
 0<\beta-\beta'\le2m\,2^{-m}D_m.
\end{equation}
\end{proposition}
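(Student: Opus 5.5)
The plan is to prove the three assertions in order, treating $r$ as fixed and $m\to\infty$ for the first two, and then using the recurrence for $\beta'$ from Lemma~\ref{lem:recurrence} for the third.

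For the ratio $H_m/D_m$, we have $D_m=H_m+\binom m{r-1}$. Hence
\[
 \frac{H_m}{D_m}=\frac{H_m/\binom m{r-1}}{1+H_m/\binom m{r-1}}.
\]
The dominant term of $H_m$ is $\binom m{r-2}$, and
\[
 \frac{\binom m{r-2}}{\binom m{r-1}}=\frac{r-1}{m-r+2}=\frac{r-1}{m}+O_r(m^{-2}).
\]
Each lower term $\binom mj$ with $j\le r-3$ satisfies $\binom mj/\binom m{r-1}=O_r(m^{-2})$. (When $r=2$, $H_m=1=\binom m0$ and the same computation applies.) So $H_m/\binom m{r-1}=(r-1)/m+O_r(m^{-2})$. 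Feeding this into $x/(1+x)=x+O(x^2)$ gives the first estimate in~\eqref{eq:block-asymptotics}. For the second estimate, $D_m\le r\,m^{r-1}$ for $m\ge r$, which is polynomial in $m$, so $2^{-m}D_m\to0$.

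For~\eqref{eq:beta-drift}, Lemma~\ref{lem:recurrence} gives $\beta-\beta'=G_m/(2^m-D_m)$. By~\eqref{eq:binomial-identities}, $G_m=(m-r+1)\binom m{r-1}$. This is positive because $m\ge r$, and the denominator is positive as already noted, so $\beta-\beta'>0$. For the upper bound, the hypothesis $2^{-m}D_m\le 1/2$ gives $2^m-D_m\ge 2^{m-1}$. Also $G_m\le m\binom m{r-1}\le mD_m$. Combining these,
\[
 \beta-\beta'\le \frac{mD_m}{2^{m-1}}=2m\,2^{-m}D_m,
\]
as required.

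No step is a genuine obstacle. The only care needed is in the error term of the first estimate: one must check that the lower-order binomials contribute only $O_r(m^{-2})$, and that the small case $r=2$, where $H_m$ has a single term, is covered by the same computation.
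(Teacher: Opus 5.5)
Your proposal is correct and takes essentially the same approach as the paper: you factor out $\binom m{r-1}$ via $D_m=H_m+\binom m{r-1}$, show the lower binomials contribute $O_r(m^{-2})$, bound $D_m\le r\,m^{r-1}$, and get the drift bound from $0<G_m=(m-r+1)\binom m{r-1}\le mD_m$ together with $2^m-D_m\ge 2^{m-1}$. The only difference is presentational: you write the ratio as $x/(1+x)$, where the paper writes the denominator as $\binom m{r-1}\bigl(1+O_r(m^{-1})\bigr)$.
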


\begin{proof}
The ratio
\[
 \frac{\binom m{r-2}}{\binom m{r-1}}
 =\frac{r-1}{m-r+2}
 =\frac{r-1}{m}+O_r(m^{-2})
\]
gives the leading term of $H_m/\binom m{r-1}$. The sum of the terms
$\binom mj/\binom m{r-1}$ with $0\le j\le r-3$ is
$O_r(m^{-2})$; for $r=2$ this sum is empty. Therefore
\[
 \frac{H_m}{D_m}
 =\frac{\binom m{r-1}\bigl((r-1)/m+O_r(m^{-2})\bigr)}
        {\binom m{r-1}\bigl(1+O_r(m^{-1})\bigr)}
 =\frac{r-1}{m}+O_r(m^{-2}).
\]
Also $D_m\le r m^{r-1}$, so $2^{-m}D_m\to0$.

By~\eqref{eq:binomial-identities}, $0<G_m\le mD_m$ for every
$m\ge r$. If $2^{-m}D_m\le1/2$, \eqref{eq:recurrences}
gives
\[
 0<\beta-\beta'
 =\frac{2^{-m}G_m}{1-2^{-m}D_m}
 \le2m\,2^{-m}D_m.\qedhere
\]
\end{proof}

For convenience, we now include the empty tuple $\bm=\emptyset$,
corresponding to $\ell=0$, by putting
\begin{equation}\label{eq:empty}
 v=1,\qquad u=\alpha=\beta=0,\qquad
 c_{r,\tau}(\emptyset)=r-1.
\end{equation}
These values agree with the recurrences. Indeed, for a one-entry
tuple $(m)$, direct counting gives
\[
 v=1-2^{-m}D_m,\qquad u=2^{-m}H_m,\qquad
 \alpha=2^{-m}E_m,\qquad
 \beta=-\frac{G_m}{2^m-D_m}.
\]
These are exactly the values obtained by applying~\eqref{eq:recurrences}
to~\eqref{eq:empty}. Hence Lemma~\ref{lem:recurrence} and
the drift bound~\eqref{eq:beta-drift} also apply to an extension
of the empty tuple.

\section{Finite growth}\label{sec:growth}

We first prove that a finite sequence of extensions gives a tuple
whose coefficient is arbitrarily close to $2(r-1)$.

\begin{lemma}\label{lem:growth}
For every fixed integer $r\ge2$ and every $\varepsilon>0$, there is
a finite tuple $\bm$, depending only on $r$ and $\varepsilon$,
such that
\[
 c_{r,\tau}(\bm)>2(r-1)-\varepsilon
 \qquad\text{for both }\tau\in\{0,1\}.
\]
\end{lemma}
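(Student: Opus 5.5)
The plan is to build $\bm$ by repeated extensions starting from the empty tuple, and to track the coefficient through \eqref{eq:coefficient-increment}. By \eqref{eq:empty} we start with $c_{r,\tau}(\emptyset)=r-1$ and $v=1$. Telescoping Lemma~\ref{lem:recurrence} along a sequence of extensions gives
\[
 c_{r,\tau}(\bm)=(r-1)+\sum_{\text{steps}}(v-v')\,\rho_{r,\tau}(m,\beta).
\]
Here each $v-v'=v\,2^{-m}D_m>0$, and the sum of these decrements equals $1-v_{\mathrm{final}}$. So it suffices to arrange two things. First, $v_{\mathrm{final}}\le\varepsilon/(4(r-1))$. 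Second, every $\rho$ used satisfies $\rho\ge(r-1)(1-\eta)$ for a small $\eta$ depending on $\varepsilon$. By \eqref{eq:rho} and \eqref{eq:block-asymptotics},
\[
 \rho_{r,\tau}(m,\beta)\ge(r-1)-2(|\beta|+r)\Bigl(\frac{r-1}{m}+O_r(m^{-2})\Bigr),
\]
so we need both $m$ large and $|\beta|/m$ small at every step. Since $\beta\le0$ and $\beta$ decreases at each step by \eqref{eq:beta-drift}, the real task is to control $|\beta|/m$.

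The obstacle is a tension between driving $v$ to $0$ and keeping $\beta$ under control. Write $p_m=2^{-m}D_m$. Each step with block size $m$ multiplies $v$ by $1-p_m$ and lowers $\beta$ by at most $2mp_m$. With a single fixed $m$ used $k$ times, $-\log v\approx kp_m$ while $|\beta|\approx m\,kp_m$. Then $|\beta|/m\approx -\log v$ is not small, and the resulting gain $\int_0^\infty e^{-t}(1-2t)\,dt$ is even negative. So the block sizes must grow rapidly, in stages.

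Fix a small $\delta>0$ and a number of stages $S\approx\delta^{-1}\log(4(r-1)/\varepsilon)$. In stage $s$ we use a single block size $M_s$, repeated $k_s$ times. The choice of $k_s$ is $k_s=\lceil\delta/p_{M_s}\rceil$, so that stage $s$ multiplies $v$ by roughly $e^{-\delta}$; this makes $v_{\mathrm{final}}\le e^{-S\delta}$ up to harmless factors. The drift accumulated in stage $s$ is then at most $2M_s k_s p_{M_s}\le 2M_s(\delta+p_{M_s})\le 3\delta M_s$, using \eqref{eq:beta-drift}; this needs $p_{M_s}\le\min(1/2,\delta)$, which holds for large $M_s$ by \eqref{eq:block-asymptotics}. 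We choose the sizes inductively with
\[
 M_s\ge\delta^{-1}\sum_{t<s}3\delta M_t,
\]
and also each $M_s$ larger than a threshold $M_0(r,\delta)$. Then at every step of stage $s$ we have $|\beta|\le 3\delta M_s+3\delta M_s$, hence $|\beta|/M_s\le 6\delta$. Together with $r/M_s$ and the $O_r(M_s^{-2})$ term being small, this gives $\rho\ge(r-1)(1-13\delta)-o(1)$, uniformly in $\tau\in\{0,1\}$.

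Combining the two estimates,
\[
 c_{r,\tau}(\bm)\ge(r-1)+(1-v_{\mathrm{final}})(r-1)(1-14\delta)>2(r-1)-\varepsilon
\]
once $\delta$ is small and $S$ is chosen as above. The tuple $\bm$ is the concatenation of $k_s$ copies of $M_s$ for $s=1,\ldots,S$. It is finite, every entry is at least $r$, and it depends only on $r$ and $\varepsilon$, because all choices were made from $r,\varepsilon,\delta$ alone. The same $\bm$ works for both values of $\tau$, since the bound on $\rho$ is uniform in $\tau$. The main technical step I expect to need care is the bookkeeping of the per-stage decrease of $v$. I would measure progress by $-\log v$, using $-\log(1-p)\ge p$ for the lower bound on the decrease. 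I would bound the drift using $k_sp_{M_s}\le\delta+p_{M_s}$, to make the inductive choice of $M_s$ rigorous.
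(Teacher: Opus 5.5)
Your proposal is correct and follows essentially the same route as the paper. Both use finitely many stages; each stage appends about $\delta/p_m$ copies of one large block size, so $v$ drops by a constant factor while $\beta$ drifts by only $O(\delta m)$, and the block sizes grow so that $|\beta|/m=O(\delta)$ keeps $\rho\ge(r-1)(1-O(\delta))$. The only real difference is how you control $\beta$ at the start of a stage: you use the a priori growth condition $M_s\ge3\sum_{t<s}M_t$, whereas the paper simply picks $m$ with $|\beta^{(i-1)}|\le\eta m$ from the already-known value of $\beta^{(i-1)}$.

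There is one harmless arithmetic slip. With $p_{M_s}\le\delta$ you get $2M_s(\delta+p_{M_s})\le4\delta M_s$, not $3\delta M_s$; the latter needs $p_{M_s}\le\delta/2$. Your bound $|\beta|/M_s\le6\delta$ still holds either way.
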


\begin{proof}
It is enough to consider $0<\varepsilon<r-1$. Choose
\[
 0<\eta<\min\left\{\frac1{32},
                   \frac{\varepsilon}{17(r-1)}\right\},
 \qquad
 T=\left\lceil\frac2\eta\log\frac1\eta\right\rceil.
\]
We construct $\bm^{(T)}$ in $T$ steps.

\smallskip
\noindent\textbf{Step 0.}
Put $\bm^{(0)}=\emptyset$. By~\eqref{eq:empty},
$c_{r,\tau}(\bm^{(0)})=r-1$, $v^{(0)}=1$ and $\beta^{(0)}=0$.

\smallskip
\noindent\textbf{Step $i$, where $1\le i\le T$.}
Suppose that the preceding step gives
$\bm^{(i-1)}=(m_1,\ldots,m_\ell)$ for some $\ell\ge0$.
Its parameter $\beta^{(i-1)}$ is finite. By
\eqref{eq:block-asymptotics} and $\beta^{(i-1)}$ being finite, we can choose an integer $m\ge r$
so large that
\begin{equation}\label{eq:choice}
 |\beta^{(i-1)}|\le\eta m,\qquad
 2^{-m}D_m\le\frac\eta2,\qquad
 \frac{H_m}{D_m}\le\frac{2(r-1)}m,\qquad
 \frac rm\le\eta.
\end{equation}
Let $s=\left\lfloor\eta/(2^{-m}D_m)\right\rfloor$ and append $s$
entries equal to $m$ to obtain
\[
 \bm^{(i)}=(m_1,\ldots,m_\ell,m_{\ell+1},\ldots,m_{\ell+s}),
 \qquad m_{\ell+1}=\cdots=m_{\ell+s}=m.
\]
For convenience, put
$\bm_j=(m_1,\ldots,m_\ell,m_{\ell+1},\ldots,m_{\ell+j})$
for $0\le j\le s$, and write $u_j,v_j,\alpha_j,\beta_j$
for its parameters. In particular,
$\bm_0=\bm^{(i-1)}$ and $\bm_s=\bm^{(i)}$.
The definition of $s$ and~\eqref{eq:choice} give
\[
 \frac\eta2\le \eta-2^{-m}D_m\le s\,2^{-m}D_m\le\eta.
\]
Since $\eta<1/32$, the drift bound~\eqref{eq:beta-drift} applies
at every extension. For $0\le j\le s$, it gives
\[
 |\beta_j|
 \le|\beta^{(i-1)}|+\sum_{t=0}^{j-1}|\beta_{t+1}-\beta_t|
 \le\eta m+2jm\,2^{-m}D_m
 \le3\eta m.
\]
By~\eqref{eq:rho} and~\eqref{eq:choice}, for $0\le j<s$ and
$\tau\in\{0,1\}$,
\begin{align*}
 \rho_{r,\tau}(m,\beta_j)
 &=(r-1)+2(\beta_j+\tau-r)\frac{H_m}{D_m}\\
 &\ge(r-1)+2(\beta_j-r)\frac{H_m}{D_m}\\
 &\ge(r-1)-2(r+3\eta m)\frac{2(r-1)}m\\
 &\ge(r-1)-16(r-1)\eta>0.
\end{align*}
Thus~\eqref{eq:coefficient-increment} gives
\begin{align}
 c_{r,\tau}(\bm^{(i)})-c_{r,\tau}(\bm^{(i-1)})
 &=\sum_{t=0}^{s-1}
        \bigl(c_{r,\tau}(\bm_{t+1})-c_{r,\tau}(\bm_t)\bigr)
                                                        \notag\\
 &\ge(v^{(i-1)}-v^{(i)})
                \bigl((r-1)-16(r-1)\eta\bigr).             \label{eq:step-gain}
\end{align}
Here we used $\sum_{t=0}^{s-1}(v_t-v_{t+1})=v^{(i-1)}-v^{(i)}$.
Also, by~\eqref{eq:recurrences},
\[
 v^{(i)}=v_s=v^{(i-1)}(1-2^{-m}D_m)^s
 \le v^{(i-1)}e^{-s2^{-m}D_m}
 \le v^{(i-1)}e^{-\eta/2}.
\]

Repeat this step $T$ times. At each step the current tuple and its
parameters are finite, so the choice in~\eqref{eq:choice} is possible.
After the last step, we have
\[
 v^{(T)}\le v^{(0)}e^{-T\eta/2}\le\eta.
\]
Summing~\eqref{eq:step-gain} over $1\le i\le T$ gives
\[
 c_{r,\tau}(\bm^{(T)})-c_{r,\tau}(\bm^{(0)})
 \ge(1-v^{(T)})\bigl((r-1)-16(r-1)\eta\bigr).
\]
Since the last factor is positive, it follows that
\begin{align*}
 c_{r,\tau}(\bm^{(T)})
 &\ge(r-1)+(1-\eta)\bigl((r-1)-16(r-1)\eta\bigr)\\
 &\ge2(r-1)-17(r-1)\eta
 >2(r-1)-\varepsilon.
\end{align*}
There are only $T$ steps, and both $m$ and $s$ are finite at every
step. Thus the final tuple is finite and depends only on $r$ and
$\varepsilon$. All choices and estimates hold for both values of
$\tau$, so the same tuple works for both parities.
\end{proof}

\begin{proof}[Proof of Theorem~\ref{thm:main}]
Fix $r\ge2$ and $\varepsilon>0$.
By Lemma~\ref{lem:growth}, there is a finite tuple $\bm$ such that
\[
 c_{r,\tau}(\bm)>2(r-1)-\frac\varepsilon2
 \qquad(\tau\in\{0,1\}).
\]
Fix this tuple and let $d$ be the sum of its entries.
For $n\ge N_1=\max\{2,d,r-1\}$, choose the corresponding blocks
inside $[n]$. Proposition~\ref{prop:coefficient} supplies an
$a\in\mathbb Z_n$ such that
\[
 |\mathcal F_a(\bm,X)|\ge
 M_n\left(1+\frac{2(r-1)-\varepsilon/2
                         +O_{r,\bm}(n^{-1})}{n}\right).
\]
Since the tuple was fixed before $n$ tends to infinity, there is
an $N_2$ such that the error in the numerator is at least
$-\varepsilon/2$ for every $n\ge N_2$. With
$n_0=\max\{N_1,N_2\}$, we obtain
\[
 |\mathcal F_a(\bm,X)|\ge
 M_n\left(1+\frac{2(r-1)-\varepsilon}{n}\right)
 \qquad(n\ge n_0).
\]
By Lemma~\ref{lem:free}, this family is $r$-fork-free.
Hence
\[
 \liminf_{n\to\infty}
 n\left(\frac{\La(n,V_r)}{M_n}-1\right)\ge2(r-1).
\]
The upper bound in Theorem~\ref{thm:known} gives the reverse
inequality for the limit superior. This proves the theorem.
\end{proof}
\section*{Acknowledgement}
This research is supported by the National Natural Science Foundation of China  (Grant 12571372).

\section*{Declaration of competing interest}
The authors declare that they have no known competing financial interests or personal relationships that could have appeared to influence the work reported in this paper.

\section*{Data availability}
No data was used for the research described in the article.

\section*{Declaration of AI use}

The authors used AI tools for preliminary research assistance and
auxiliary computations. All proofs and calculations have been checked
by the authors, who take full responsibility for their correctness.

\end{document}